\begin{document}
\makeatletter
\def\@begintheorem#1#2{\trivlist \item[\hskip \labelsep{\bf #2\ #1.}] \it}
\def\@opargbegintheorem#1#2#3{\trivlist \item[\hskip \labelsep{\bf #2\ #1}\ {\rm (#3).}]\it}
\makeatother
\newtheorem{thm}{Theorem}[section]
\newtheorem{alg}[thm]{Algorithm}
\newtheorem{conj}[thm]{Conjecture}
\newtheorem{lemma}[thm]{Lemma}
\newtheorem{defn}[thm]{Definition}
\newtheorem{cor}[thm]{Corollary}
\newtheorem{exam}[thm]{Example}
\newtheorem{prop}[thm]{Proposition}
\newenvironment{proof}{{\sc Proof}.}{\rule{3mm}{3mm}}

\title{On the chromatic index\\ of generalized truncations}
\author{Brian Alspach and Aditya Joshi\\School of Mathematical and Physical Sciences\\
University of Newcastle\\Callaghan, NSW 2308,
Australia\\brian.alspach@newcastle.edu.au\\aditya.joshi@uon.edu.au}
\maketitle

\begin{abstract} We examine the chromatic index of generalized truncations of
graphs and multigraphs.
\end{abstract}

\noindent {\sc\bf Keywords}: generalized truncation, chromatic index

\medskip

\noindent {\sc\bf AMS Classification}: 05C15

\section{Introduction}

A broad definition of generalized truncations of graphs was introduced in \cite{A3}.  We
give this definition now for completeness, but first a brief word about terminology is in order.
The term {\it multigraph} is used if multiple edges are allowed.  Thus, a graph does not
have multiple edges.  We use $V(X)$
to denote the set of vertices of a multigraph $X$ and $E(X)$ to denote the set of
edges.  The {\it order} of $X$ is $|V(X)|$ and the {\it size} of $X$ is $|E(X)|$.  Finally,
the {\it valency} of a vertex $u$, denoted $\mathrm{val}(u)$, is the number of edges
incident with $u$.  The term $k$-{\it valent} multigraph is used for regular multigraphs
of valency $k$.  Throughout the paper, $\Delta(X)$ denotes the maximum valency of the
multigraph $X$ and $\Delta$ often is used when the graph $X$ involved is apparent.

Given a multigraph $X$, a {\it generalized truncation} of $X$ is obtained as follows via a
two-step operation. The first step is the {\it excision step}.  Let $M$ denote an auxiliary 
matching (no two edges have a vertex in common) of size $|E(X)|$.  Let
$F:E(X)\rightarrow M$ be a bijective function and for $[u,v]\in E(X)$, label the ends of the
edge $F([u,v])$ with $u$ and $v$.  Let $M_F$ denote the vertex-labelled matching thus obtained.
So $M_F$ represents the edges of $X$ completely disassembled.

The second step is the {\it assemblage step}.  For each $v\in V(X)$, the set of vertices of
$M_F$ labelled with $v$ is called the {\it cluster at} $v$ and is denoted $\mathrm{cl}(v)$.  
Insert an arbitrary graph on $\mathrm{cl}(v)$.  The inserted graph on $\mathrm{cl}(v)$ is
called the {\it constituent graph at $v$} and is denoted $\mathrm{con}(v)$. The resulting
graph $$M_F\cup_{v\in V(X)} \mathrm{con}(v)$$
is a {\it generalized truncation} of $X$.  We usually think of the labels on the vertices of
$F(M)$ as being removed following the assemblage stage, but there are many times when the labels
are useful in the exposition.  We use $\mathrm{TR}(X)$ to denote a
generalized truncation of the multigraph $X$.

Truncations arise via action involving the edges incident with a vertex.  Consequently, isolated
vertices are useless and we make the important convention that the multigraphs from
which we are forming generalized truncations do not have isolated vertices.  This will not be
mentioned in the subsequent material, but is required for the validity of a few statements.

Recall that a {\it proper edge coloring} of a multigraph $X$ is a coloring of the edges so that
adjacent edges do not have the same color.  The {\it chromatic index} of $X$, denoted
$\chi'(X)$, is the fewest number of colors for which a proper edge coloring exists.  We now
state Vizing's well-known theorem \cite{V1} which plays a fundamental role in studying
chromatic index.  

\begin{thm}\label{viz} If $X$ is a graph, then its chromatic index is either $\Delta(X)$ or
$\Delta(X)+1$.
\end{thm}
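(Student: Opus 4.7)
The plan is to establish the two inequalities $\chi'(X) \geq \Delta(X)$ and $\chi'(X) \leq \Delta(X)+1$ separately. The lower bound is immediate: any vertex $v$ of valency $\Delta(X)$ is incident with $\Delta(X)$ pairwise adjacent edges, so any proper edge coloring must assign them pairwise distinct colors.

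For the upper bound, I would argue by induction on the size $|E(X)|$. The base case is trivial. For the inductive step, remove an arbitrary edge $uv_0$ and apply the hypothesis to color the remaining graph properly with $\Delta(X)+1$ colors; the goal is to extend this coloring to cover $uv_0$. Since every vertex meets at most $\Delta(X)$ edges but we have $\Delta(X)+1$ colors, every vertex has at least one \emph{missing} color (a color not appearing on any edge incident with it). If some color is missing at both $u$ and $v_0$, use it on $uv_0$ and we are done.

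Otherwise, I would build a maximal \emph{Vizing fan}: a sequence $v_0, v_1, \ldots, v_k$ of distinct neighbors of $u$ such that for each $i \geq 1$, the edge $uv_i$ carries a color $\alpha_i$ that is missing at $v_{i-1}$. Let $\beta$ be a color missing at $u$ and $\gamma_i$ a color missing at $v_i$. If $\beta$ is missing at some $v_i$, one can shift colors along the fan (reassigning $uv_j$ to $\alpha_{j+1}$ for $j<i$ and coloring $uv_i$ with $\beta$) to obtain a valid extension.

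The main obstacle, and the technical heart of the argument, is the case in which $\beta$ is missing at no $v_i$. Here I would invoke a Kempe-chain swap: consider the subgraph of edges colored $\beta$ or $\gamma_k$, which is a disjoint union of paths and even cycles, and focus on the component $P$ containing $v_k$. A careful parity and endpoint analysis shows that $P$ cannot simultaneously terminate at $u$ and at every vertex $v_{i-1}$ where $\gamma_k = \alpha_i$; by swapping $\beta$ and $\gamma_k$ along $P$, one creates a configuration in which either $\beta$ becomes missing at $v_k$ (finishing via a shift of the full fan) or at some earlier $v_{i-1}$ (finishing via a shift of the truncated fan $v_0,\ldots,v_{i-1}$). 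The induction then completes the proof.
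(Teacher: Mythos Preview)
Your outline is the standard Vizing-fan proof and is essentially correct; the fan-shift and Kempe-chain swap you describe are exactly the right ideas, and with the usual care about maximality of the fan and the case analysis on the endpoints of the $\beta$--$\gamma_k$ path, the argument goes through.

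However, there is nothing to compare against: the paper does not prove this theorem at all. It merely states Vizing's theorem as background and cites the original source \cite{V1}. So your proposal is not an alternative to the paper's argument but rather a proof where the paper offers none. If you intend to include it, be aware that what you have written is a sketch; the ``careful parity and endpoint analysis'' you allude to is precisely where the work lies, and a reader would expect that step spelled out (in particular, the observation that $P$ is a path starting at $v_k$ with a $\beta$-edge, and the case split on whether $P$ ends at $u$, at some $v_{j}$ with $\gamma_j=\gamma_k$, or elsewhere).
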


The preceding theorem leads to a classification of graphs as follows.  A graphs is {\it class} I
if its chromatic index is equal to its maximum valency $\Delta$ and is {\it class} II otherwise.
There is a generalization for multigraphs and to ease subsequent exposition we shall say
a multigraph is class I if its chromatic index equals its maximum valency.

The following result was proved in \cite{A3}.  A generalized truncation $\mathrm{TR}(X)$
is called {\it complete} when every constituent is a complete subgraph.

\begin{thm}\label{color} If $X$ is a class {\em I} graph, then its complete truncation also is
class {\em I}.  If $X$ is a class {\em II} graph and its maximum valency is even, then its
complete truncation is class {\em I}.
\end{thm}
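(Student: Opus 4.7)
The plan is to produce a proper $\Delta$-edge-coloring of $\mathrm{TR}(X)$ with $\Delta=\Delta(X)$. I start by noting that each $u\in\mathrm{cl}(v)$ has valency $\mathrm{val}(v)$ in $\mathrm{TR}(X)$: namely $\mathrm{val}(v)-1$ edges inside $\mathrm{con}(v)=K_{\mathrm{val}(v)}$ together with the single edge of $M_F$ at $u$. So $\Delta(\mathrm{TR}(X))=\Delta$, and by Theorem~\ref{viz} it suffices to show $\chi'(\mathrm{TR}(X))\le\Delta$. I split the argument according to the parity of $\Delta$.

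For $\Delta$ even (which covers both a class I $X$ and a class II $X$ with even $\Delta$), the set $M_F$ is itself a matching in $\mathrm{TR}(X)$, so I assign all its edges the single color $\Delta$. For each $v$, the clique $\mathrm{con}(v)=K_{\mathrm{val}(v)}$ satisfies $\chi'(K_{\mathrm{val}(v)})\le\Delta-1$: this equals $\mathrm{val}(v)-1$ when $\mathrm{val}(v)$ is even, and equals $\mathrm{val}(v)\le\Delta-1$ when $\mathrm{val}(v)$ is odd (using $\mathrm{val}(v)\ne\Delta$, as $\Delta$ is even). Hence each $\mathrm{con}(v)$ admits a proper edge-coloring from $\{1,\ldots,\Delta-1\}$, disjoint from the matching-color $\Delta$, and the combined coloring of $\mathrm{TR}(X)$ is proper.

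For $\Delta$ odd and $X$ class I, I fix a proper $\Delta$-edge-coloring $c$ of $X$ and color each matching edge of $\mathrm{TR}(X)$ coming from $e\in E(X)$ with $c(e)$; two matching edges meeting a single cluster get distinct colors because they correspond to distinct $X$-edges at one vertex. Let $S_v\subseteq\{1,\ldots,\Delta\}$ be the set of colors on the $X$-star at $v$ and let $\sigma_v:\mathrm{cl}(v)\to S_v$ be the bijection sending $u$ to the color of its matching edge. It remains to properly edge-color each $K_{\mathrm{val}(v)}$ from the palette $\{1,\ldots,\Delta\}$ so that the clique edges at $u$ avoid $\sigma_v(u)$. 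If $\mathrm{val}(v)$ is odd, the standard near-one-factorization of $K_{\mathrm{val}(v)}$ on $\mathbb{Z}_{\mathrm{val}(v)}$ (edge $\{i,j\}\mapsto i+j$, vertex $i$ missing $2i$) gives an edge-coloring in which each vertex misses exactly one color, and since $2$ is invertible modulo $\mathrm{val}(v)$, I can compose with a linear bijection of $\mathbb{Z}_{\mathrm{val}(v)}$ to make the missed-color bijection equal to $\sigma_v$. If $\mathrm{val}(v)$ is even, then $\mathrm{val}(v)<\Delta$, so I pick $c^\ast\in\{1,\ldots,\Delta\}\setminus S_v$, adjoin a virtual vertex $w$ to form $K_{\mathrm{val}(v)+1}$ (now odd), precolor each $wu$ with $\sigma_v(u)$, and extend by the analogous relabeled near-one-factorization with $w$ missing $c^\ast$. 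The resulting coloring on the original $K_{\mathrm{val}(v)}$ lies in $S_v\cup\{c^\ast\}\subseteq\{1,\ldots,\Delta\}$ and, at each $u$, avoids $\sigma_v(u)$.

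The main obstacle to anticipate is the case $\mathrm{val}(v)=\Delta$ with $\Delta$ even: extending the $X$-coloring as in the odd case would demand a proper edge-coloring of $K_\Delta$ in which each color is missed at exactly one vertex, which is impossible by a parity count (for $\Delta$ even, every color class in $K_\Delta$ misses an even number of vertices). This is why the even-$\Delta$ case must abandon the induced approach in favor of the uniform ``single color for all matching edges'' trick, and it is also why the theorem can drop the class~I hypothesis on $X$ whenever $\Delta$ is even.
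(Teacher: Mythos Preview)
Your proof is correct. Note first that the paper does not prove Theorem~\ref{color} itself (it is quoted from \cite{A3}); the comparison is with the paper's proof of the more general Theorem~\ref{main}, which contains Theorem~\ref{color} as a special case.

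For $\Delta$ even your argument is identical to the paper's: give $M_F$ a single colour and colour each clique with at most $\Delta-1$ colours. For $\Delta$ odd your route is genuinely different and more elementary. You exploit the class~I hypothesis directly, so that $|S_v|=\mathrm{val}(v)$ at every vertex, and then handle every constituent by a near-one-factorization argument: for odd $\mathrm{val}(v)$ you relabel the standard $\{i,j\}\mapsto i+j$ scheme so that the missed-colour bijection equals $\sigma_v$, and for even $\mathrm{val}(v)<\Delta$ you adjoin a dummy vertex and do the same on $K_{\mathrm{val}(v)+1}$, then delete it. (Your phrase ``compose with a linear bijection'' is a bit loose---what you are really using is that, since $i\mapsto 2i$ is a bijection of $\mathbb{Z}_{\mathrm{val}(v)}$, any prescribed vertex-to-missed-colour bijection is realised by suitable choices of the vertex labelling and the colour labelling---but the content is fine.) The paper, by contrast, treats constituents of order $\le\Delta-2$ via the H\"aggkvist--Janssen list-colouring bound (Theorem~\ref{HJ}) and devotes a lengthy ad~hoc construction to the awkward order-$(\Delta-1)$ case. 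Your argument avoids both of these, at the cost of using the full strength of the class~I assumption; the paper's machinery is heavier because it is built to handle the weaker ``edge-feasible'' hypothesis of Theorem~\ref{main}, where the colours on the $M_F$-edges at a small constituent need not be distinct.
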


The purpose of this paper is to extend the preceding result and explore edge colorings of
generalized truncations in more detail.

\section{Some Useful Results}

In the following material we shall be considering the relationship between multigraphs and
their generalized truncations that are class I.
Doing so requires the use of some well-known edge coloring results which we now give for
completeness. 

Let $\sigma$ be a permutation  of the vertex set of the complete graph $K_n$ of order $n$.
If $Y$ is a subgraph of $K_n$, then $\sigma(Y)$ denotes the subgraph of $K_n$ whose edge
set is $\{[\sigma(u),\sigma(v)]:[u,v]\in E(Y)\}$.  Let $n$ be even and $\rho$ be the permutation
defined by $\rho=(u_0)(u_1\;\;u_2\;\;u_3\;\cdots\;u_{n-1})$.  So $\rho$ fixes one vertex and
cyclically rotates the remaining vertices.  Let $Y$ be the perfect matching of $K_n$ consisting of
the edges \[[u_0,u_1],[u_2,u_{n-1},[u_3,u_{n-2}],\ldots,[u_{n/2},u_{(n+2)/2}].\] We then
obtain a proper edge coloring of $K_n$ by letting the color classes be $Y,\rho(Y),\rho^2(Y),
\ldots,\rho^{n-2}(Y)$.  We call this proper edge coloring the {\it canonical edge coloring} of
$K_n$.  Figure 1 shows $Y$ for $K_8$.

\begin{picture}(100,140)(-130,-30)
\multiput(20,0)(20,0){2}{\circle*{5}}
\multiput(0,30)(60,0){2}{\circle*{5}}
\multiput(0,60)(60,0){2}{\circle*{5}}
\put(30,80){\circle*{5}}
\put(30,40){\circle*{5}}
\put(30,40){\line(0,1){40}}
\put(0,30){\line(1,0){60}}
\put(0,60){\line(1,0){60}}
\put(20,0){\line(1,0){20}}
\put(10,-20){\sc Figure 1}
\put(35,40){\small $u_0$}
\put(35,80){\small $u_1$}
\put(65,60){\small $u_2$}
\put(65,30){\small $u_3$}
\put(45,3){\small $u_4$}
\put(5,3){\small $u_5$}
\put(-15,30){\small $u_6$}
\put(-15,60){\small $u_7$}
\end{picture}

We obtain a proper edge coloring of $K_n$ with $n$ colors, $n$ odd, by starting with the
canonical edge coloring of $K_{n+1}$ and then removing the central vertex $u_0$ and the
edges incident with it.  Note that each vertex is missing an edge of one color and the set
of missing colors has cardinality $n$.  This fact is true for any proper edge coloring of $K_n$,
$n$ odd.

Because we use the preceding canonical edge coloring for both even and odd orders, we
describe it by referring to the edge of length 1 as the {\it anchor}.   When $n$ is even, the
anchor is the edge $[u_{n/2},u_{1+n/2}]$.  When $n$ is odd, the anchor is the edge
$[u_{(n+1)/2},u_{(n+3)/2}]$.  To obtain proper edge colorings of complete graphs,
we cyclically rotate the canonical edge coloring which, of course, cyclically rotates the
anchor.  So we may determine a color class by specifying the anchor.

\begin{lemma}\label{unique} Every proper edge coloring of $K_n$ with $n$ colors, $n$ odd,
has the property that there is one color missing on the edges incident with a given vertex,
and the set of missing colors at the vertices has cardinality $n$.
\end{lemma}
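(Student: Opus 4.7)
The plan is to run a double-counting argument based on the sizes of the color classes.

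First I would establish the easy half. A vertex $v$ of $K_n$ has valency $n-1$, and since the coloring is proper, the $n-1$ edges at $v$ receive $n-1$ distinct colors. With $n$ colors available, this forces exactly one color to be absent at each vertex; call it the \emph{missing color} of $v$, and write $m(v)$ for it. This gives a well-defined map $m:V(K_n)\to\{1,2,\ldots,n\}$.

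The remaining task is to show that $m$ is a bijection, i.e. that the multiset of missing colors has $n$ distinct elements. I would approach this by sizing the color classes. Each color class is a matching in $K_n$, and with $n$ odd a matching contains at most $(n-1)/2$ edges. Since $|E(K_n)|=n(n-1)/2$ and there are only $n$ colors, every color class must meet this upper bound exactly, so each color class is a near-perfect matching saturating all but one vertex. Thus each color $c$ determines a unique vertex $v_c$ at which color $c$ is absent.

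Now the bijectivity falls out. If two distinct colors $c\neq c'$ both had $v_c=v_{c'}=v$, then at least two colors would be missing at $v$, contradicting the first paragraph which said exactly one color is missing at each vertex. Hence $c\mapsto v_c$ is injective, and since its domain and codomain both have size $n$, it is a bijection, which is the same as saying $m$ is a bijection. Therefore the set of missing colors has cardinality $n$, as required.

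I do not expect any real obstacle: the whole argument hinges on the parity observation that a matching in $K_n$, $n$ odd, cannot cover all vertices, and on the edge count forcing equality throughout. The only thing to be slightly careful about is phrasing the second conclusion so that ``the set of missing colors has cardinality $n$'' is read as ``all $n$ colors appear as missing colors,'' rather than as a statement about the size of the color palette.
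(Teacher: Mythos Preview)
Your proof is correct and follows essentially the same approach as the paper: both arguments observe that each color class is a matching of size at most $(n-1)/2$, and then use the edge count $|E(K_n)|=n(n-1)/2$ to force every color class to be a near-perfect matching, from which the conclusion follows. Your write-up is in fact more explicit than the paper's, spelling out the bijection between colors and their missing vertices rather than leaving it as ``from which the conclusion follows.''
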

\begin{proof} There are at most $(n-1)/2$ edges of a fixed color in a proper edge coloring
of $K_n$ because $n$ is odd.  Thus, there is no proper edge coloring using just $n-1$
colors because $(n-1)^2/2<\binom{n}{2}$.  There is a proper edge coloring using $n$
colors by Vizing's Theorem.  Hence, each color class contains precisely $(n-1)/2$ edges
from which the conclusion follows.   \end{proof}

\medskip

We use a result about list chromatic index so that we discuss it briefly here and present
the result.  Given a graph $X$ and for each edge $[u,v]$ a list $L_{[u,v]}$ of colors we may
use for the edge $[u,v]$, a {\it proper list edge coloring} is a proper edge coloring of $X$
so that the color on each edge $[u,v]$ belongs to $L_{[u,v]}$.  The {\it list chromatic index}
of $X$ is the smallest $N$ such that if every list $L_{[u,v]}$ has cardinality $N$, then
$X$ admits a proper list edge coloring.  We denote this value by $\chi'_L(X)$.
The following important result was proved by H\"aggkvist and Jansen in \cite{H1}.

\begin{thm}\label{HJ} The complete graph $K_n$ satisfies $\chi'_L(K_n)\leq n$.
\end{thm}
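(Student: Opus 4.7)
The plan is to attempt the bound $\chi'_L(K_n) \leq n$ via the Alon--Tarsi algebraic framework for list colorings, applied to the line graph $L(K_n)$. Recall that Alon and Tarsi showed that if the graph polynomial $P_G(\mathbf{x}) = \prod_{[u,v] \in E(G)}(x_u - x_v)$ contains a monomial $\prod_v x_v^{d_v}$ with nonzero coefficient and $d_v < k$ for every vertex $v$, then $G$ is $k$-choosable. Applied to $L(K_n)$, with one variable per edge of $K_n$, the task becomes producing such a nonzero-coefficient monomial with each exponent at most $n-1$.

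First, I would pick a target monomial guided by an ordinary proper edge coloring. If $c:E(K_n)\to\{1,\ldots,n\}$ is a proper edge coloring with $n$ colors (which exists by Vizing's Theorem), then assigning each edge $e$ the exponent $c(e)-1$ yields a monomial whose exponent at every variable lies in $\{0,\ldots,n-1\}$. Under the Alon--Tarsi correspondence, the coefficient of this monomial equals a signed count of certain orientations of $L(K_n)$ with prescribed out-degrees, the sign being determined by the parity of the number of edges of $L(K_n)$ whose orientation disagrees with a fixed reference.

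Next, I would try to show that this signed count is nonzero. For even $n$, the natural reference is a 1-factorization $K_n = M_1\cup\cdots\cup M_{n-1}$; after ordering the matchings, the reference orientation on $L(K_n)$ is defined edge-by-edge, and one would seek an involution that pairs up orientations of opposite sign except for a distinguished family corresponding to the 1-factorization itself, which contributes a surviving nonzero term. For odd $n$, no 1-factorization exists, so I would replace the reference with a near-1-factorization together with the central-vertex-missing construction used for the canonical coloring (Lemma~\ref{unique}), then handle the residual star with a separate parity argument that exploits the extra color of slack.

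The main obstacle is the non-vanishing step: controlling the cancellations in the Alon--Tarsi signed sum over orientations of the dense, highly-symmetric graph $L(K_n)$ is precisely the technical heart of the theorem, and a naive involution will typically not isolate a nonzero residue. Historically, H\"aggkvist and Janssen sidestepped the algebraic method entirely, building the list coloring through a direct inductive/hypergraph-matching construction; in either approach, the argument must exploit the sharp interaction between the list size $n$ and the edge-chromatic number (either $n-1$ or $n$), since any weaker use of the slack fails at the boundary.
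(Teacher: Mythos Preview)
The paper does not prove Theorem~\ref{HJ} at all: it is quoted from the literature (reference~\cite{H1}) and used as a black box later in the proof of Theorem~\ref{main}. So there is no ``paper's own proof'' to compare against, and what is expected here is simply a citation, not an argument.

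As for your proposal itself, it is not a proof but a plan that stops exactly at the hard part. You correctly set up the Alon--Tarsi framework on $L(K_n)$ and identify a candidate monomial coming from a proper $n$-edge-coloring, but you then openly concede that establishing non-vanishing of the relevant signed orientation count ``is precisely the technical heart of the theorem'' and that ``a naive involution will typically not isolate a nonzero residue.'' Nothing after that sentence supplies the missing idea; the even-$n$ sketch (pairing orientations via an involution keyed to a $1$-factorization) is only a hope, and the odd-$n$ paragraph is vaguer still. A proof must actually exhibit the involution (or other mechanism) and verify the surviving term is nonzero. Incidentally, your historical remark is off: H\"aggkvist and Janssen did \emph{not} sidestep the algebraic method---their argument is precisely a polynomial/Alon--Tarsi style computation showing the required coefficient is nonzero, carried out via a careful sign analysis rather than a direct combinatorial list-coloring construction. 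If you want to complete this line, that paper is where the missing non-vanishing argument lives.
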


\section{Main Result}

We are interested in determining which generalized truncations of a given multigraph
$X$ are class I.  Theorem \ref{vector1} below provides a general answer and in subsequent
sections we use the theorem to describe class I generalized truncations of specific types.
Some definitions and notation are required before stating the theorem.

Given a generalized truncation $\mathrm{TR}(X)$, for convenience
we call the subgraph induced by the edges of $\mathrm{con}(v)$ together with the
edges of $M_F$ incident with the vertices of $\mathrm{con}(v)$ the {\it sun centered at}
$\mathrm{con}(v)$.  Given that $\mathrm{con}(v)$ is regular of valency $d-1$, when
is the sun centered at $\mathrm{con}(v)$ class I?  If it is class I, then there is a
proper edge coloring using $d$ colors.  We use a vector of length $d$ to describe the number
of edges of $M_F$ of the various colors in the sun centered at $\mathrm{con}(v)$.   

Given a vertex $v\in X$ of valency $r$, we say that a vector $(x_1,x_2,\ldots,x_d)$
is {\it admissible} if $\sum_i x_i=r$, the edges of $M_F$ are colored according to the
vector, and there is a $(d-1)$-regular graph on $\mathrm{con}(v)$ such that the sun
centered at $\mathrm{con}(v)$ is class I.  We say the vector is {\it totally inadmissible} if
there is no $(d-1)$-regular graph on $\mathrm{con}(v)$ which makes the sun class I. 

\begin{thm}\label{vector1} If $(x_1,x_2,\ldots,x_d)$ satisfies $x_1+x_2+\cdots+x_d=r
\geq d$, then $(x_1,x_2,\ldots,x_d)$ is admissible if and only if all of
$x_1,x_2,\dots,x_d\mbox{ and }r$ have the same parity and when the parity is odd,
$d$ also is odd.  Otherwise, the vector is totally inadmissible.  
\end{thm}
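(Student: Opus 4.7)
For necessity, suppose $\mathrm{con}(v)$ is $(d-1)$-regular and the sun admits a proper $d$-edge-colouring $\phi$. Each cluster vertex has degree exactly $d$ in the sun, so $\phi$ uses all $d$ colours at every such vertex. Fix a colour $c$: at every $u$ with $f(u)\ne c$ the unique $c$-edge incident to $u$ must be a constituent edge (the pendant carries colour $f(u)\ne c$), so the $c$-coloured constituent edges form a perfect matching of $V\setminus f^{-1}(c)$. This forces $r-x_c$ even, and hence $x_c\equiv r\pmod 2$ for every $c$. Summing $\sum_c x_c=r$ then gives $r\equiv d\cdot(r\bmod 2)\pmod 2$, which in case $r$ odd forces $d$ odd. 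Each conclusion is derived independently of the constituent, so any vector violating the parity conditions is totally inadmissible.

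For sufficiency I reformulate. Pre-colour the pendants according to $(x_1,\dots,x_d)$ so that $f\colon V\to[d]$ has $|f^{-1}(c)|=x_c$. An admissible constituent together with its $d$-edge-colouring is exactly the data of $d$ pairwise edge-disjoint matchings $M_1,\dots,M_d$ on $V$ with $M_c$ a perfect matching of $V\setminus f^{-1}(c)$: their union is the constituent, automatically $(d-1)$-regular. The parity hypotheses make each $r-x_c$ even and the total edge count $\sum_c(r-x_c)/2=r(d-1)/2$ the right size.

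I construct the $M_c$'s by induction on $r\ge d$, in steps of two. The base cases are $r=d$ (when $r,d$ share parity) and $r=d+1$ (in the remaining case, $r$ even and $d$ odd). For $r=d$ odd the only admissible vector is $(1,\dots,1)$, realised by the canonical edge-colouring of $K_d$ via Lemma~\ref{unique}; for $r=d$ even the constituent must be $K_d$ and the required colouring with prescribed missing colours at each vertex corresponds to a symmetric Latin square of order $d$ with prescribed diagonal, whose existence under the even-entry constraint is classical. The case $r=d+1$ (constituent $K_{d+1}$ minus a perfect matching) is handled by direct construction. For the inductive step $r\ge d+2$, pigeonhole yields a colour $c$ with $x_c\ge 2$; remove two vertices $u_1,u_2\in f^{-1}(c)$, apply induction on the reduced vector over $V\setminus\{u_1,u_2\}$, and graft $u_1,u_2$ back in: choose one colour $c'\ne c$ and add the direct edge $[u_1,u_2]$ coloured $c'$; for each remaining colour $c''\in[d]\setminus\{c,c'\}$ reroute a $c''$-coloured constituent edge $[w_1,w_2]$ of the smaller sun by replacing it with the pair $[u_1,w_1],[u_2,w_2]$ coloured $c''$. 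Each operation preserves $(d-1)$-regularity and keeps the colouring proper.

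The main obstacle is guaranteeing the reroutes succeed, i.e.\ that for each $c''$ the reduced constituent actually contains a $c''$-edge. This can fail on skewed vectors such as $(r-2,2,0,\dots,0)$ where the reduced matching for a dominant colour is empty. I would absorb such a colour via the choice of $c'$, and when more than one colour has an empty reduced matching I would strengthen the reduction (removing four same-type vertices when $x_c\ge 4$, with the $x_c=2$-only residuals handled as supplementary base cases). Theorem~\ref{HJ} is available as a list-colouring backstop on a fixed $(d-1)$-regular host when the combinatorial bookkeeping becomes unwieldy. Dispatching these degenerate vectors and verifying that the parity conditions are exactly what is needed to make the global grafting consistent constitute the technical heart of the argument.
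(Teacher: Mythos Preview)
Your necessity argument is correct and matches the paper's. The sufficiency approach, however, is genuinely different from the paper's and is incomplete in ways that go beyond what you flag.

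The paper does \emph{not} induct. It gives a direct, explicit construction using the canonical near-1-factorization of $K_r$. The cluster vertices are labelled so that each colour block $f^{-1}(c(i))$ occupies a run of consecutive labels; for each colour one then takes the canonical colour class whose anchor sits at the midpoint of the complementary arc and uses only the nested ``outer'' edges of that class down to the edge just spanning the run. This matching covers exactly $V\setminus f^{-1}(c(i))$, and different colours use different anchors, so the matchings are automatically edge-disjoint. The even case is handled by the $K_r$ template with a fixed centre $u_0$; colours with $x_i=0$ simply absorb an entire unused canonical class. No induction, no external Latin-square results, no list-colouring backstop.

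Your inductive scheme has a gap you do not mention: the reroute can create parallel edges. For each $c''\in[d]\setminus\{c,c'\}$ you replace some $[w_1,w_2]$ by $[u_1,w_1],[u_2,w_2]$, but nothing prevents two different colours from selecting edges sharing an endpoint, which would give two edges $[u_1,w]$ in the constituent. Since constituents must be simple graphs, you need a simultaneous choice of one edge per $M_{c''}$ together with an orientation so that both the $u_1$-ends and the $u_2$-ends are pairwise distinct; this is a nontrivial SDR-type condition that your sketch does not establish. Combined with the acknowledged issues (the unfinished $r=d+1$ base case, the appeal to the symmetric-Latin-square-with-prescribed-diagonal theorem for $r=d$ even, and the degenerate-vector patching), the argument is a plausible programme but not a proof. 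The paper's constructive route sidesteps every one of these difficulties.
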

\begin{proof} Suppose that $\mathrm{val}(v)=r$ in a multigraph $X$ and that there
is a class I sun centered at $\mathrm{con}(v)$ which is regular of valency $d$.  Let
$(x_1,x_2,\ldots,x_d)$ be the vector for the colors of $M_F$.  For an arbitrary color
$c(i)$, if the number of edges of color $c(i)$ in $\mathrm{con}(v)$ is $k$, then the
number of edges of color $c(i)$ in $M_F$ must be $r-2k$.  It then follows that
$x_1,x_2,\ldots,x_d\mbox{ and }r$ all have the same parity.  Moreover, when $r$ is
odd, then the graph $\mathrm{con}(v)$ is regular of valency $d-1$ and order $r$
which implies that $d-1$ is even.  This completes the necessity.

Now suppose that all coordinates of $(x_1,x_2,\ldots,x_d)$ are odd, $r$ is odd and
$x_1+x_2+\cdots+x_d=r$.  In this case we also have that $d$ is odd.  Label the $r$
vertices of the constituent $u_1,u_2,u_3,\ldots,u_r$ so that the first $x_1$ vertices
are incident with the $x_1$ edges of $M_F$ of color $c(1)$, the next $x_2$ vertices
are incident with the $x_2$ edges of $M_F$ of color $c(2)$ and continue in this way.
Also carry out subscript arithmetic modulo $r$ on the residues $1,2,3,\ldots,r$.

Consider the $x_i$ successive vertices incident with the $x_i$ edges of $M_F$ of color
$c(i)$.  Because $x_i$ is odd, there is a central edge of $M_F$ of color $c(i)$ and let
it be incident with $u_a$.  Letting $\alpha=(x_i-1)/2$, the vertices incident with edges
of $M_F$ of color $c(i)$ are $u_{a-\alpha},u_{a-\alpha+1},\ldots,u_a,u_{a+1},\ldots,
u_{a+\alpha}$.  

Consider the canonical color class whose anchor is $[u_{a+(r-1)/2},
u_{a+(r+1)/2}]$.  Color the edges of this class with color $c(i)$ starting with the
anchor and stopping with the edge $[u_{a-\alpha-1},u_{a+\alpha+1}]$.  This yields
edges colored with $c(i)$ so that each vertex of the constituent is incident with one
edge of color $c(i)$.  After doing this for each of the $d$ colors, the resulting sun
centered at the constituent is class I and the constituent is regular of valency $d-1$.

\smallskip

This leaves us with the case that all the coordinates of $(x_1,x_2,\ldots,x_d)$ are
even so that their sum $r$ also is even.  There is no restriction on the parity of $d$
in this case. Also note that $x_i=0$ is possible for various values of $i$.  Without loss
of generality, let $x_1,x_2,\ldots,x_a$ be the non-zero entries of the vector.

We label 
the vertices of the constituent a little differently using the canonical
edge coloring of Figure 1 as a template.  The central vertex is labelled $u_0$ and the
others are labelled $u_1,u_2,\ldots,u_{r-1}$.

Let the $x_1$ edges of $M_F$ of color $c(1)$ be incident with $u_0,u_1,\ldots,u_{x_1-1}$. 
Let the remaining edges of $M_F$ be incident with vertices of the constituent as in
the preceding case, that is, edges of the same color are incident with successively
labelled vertices.  

The edges of color $c(1)$ have a different form than the other colored edges making
the completion of the coloring a little different for them.  Vertices $u_1,u_2,\ldots,u_{x_1-1}$
are incident with edges of $M_F$ of color $c(1)$.  This corresponds to the color
class with anchor $[u_{(x_1+r-4)/2},u_{(x_1+r-2)/2}]$.  So color the edges of that
class with color $c(1)$ starting with the anchor until finishing with $[u_{r-1},u_{x_1}]$. 
Every vertex of the constituent now is incident with an edge of color $c(1)$.

For all the other colors $c(j)$, $1<j\leq a$, there are an even number of successive vertices,
say $u_j,u_{j+1},\ldots,u_{j+t}$, incident with edges of $M_F$ of color $c(j)$.  Then
the edge $[u_{j+(t-1)/2},u_{j+(t+1)/2}]$ is the anchor of a color class.  So complete
the coloring of the edges of this color class starting with $[u_{j-1},u_{j+t+1}]$ moving
away from the anchor.  For the $x_i=0$, simply include an entire color class from the
canonical coloring scheme using an unused anchor.  

Doing the preceding yields a class I coloring of the sun centered
at the constituent so that the constituent graph is regular of valency $d-1$ and completes
the proof.   \end{proof} 

\medskip

We now obtain three corollaries from Theorem \ref{vector1}, but first require a couple of
definitions.  An edge coloring of a multigraph $X$ is said to be {\it parity-balanced} if
for each vertex $v$ of $X$, the parity of the number of edges of each color incident with
$v$ is the same as the parity of $\mathrm{val}(v)$.  A {\it regular truncation} is a
generalized truncation for which every vertex has the same valency.   A generalized truncation
is said to be {\it semiregular} if each sun centered at a constituent $\mathrm{con}(v)$ is
class I and the subgraph $\mathrm{con}(v)$ is regular.  The distinction between a
semiregular truncation and a regular truncation is that valencies of regularity for a
semiregular truncation may differ over the constituents.  Note that the source
multigraph need not be regular in order to have a regular truncation.

\begin{cor}\label{even} Let every entry of the feasible vector $(x_1,x_2,\ldots,x_d)$ be even and
$x_1+x_2+\cdots+x_d=r$.  If $d'$ is the number of non-zero entries of the vector,
then there are class {\em I} suns centered at the appropriate constituent such that the constituents
are regular of every valency from $d'$ through $r-1$.
\end{cor}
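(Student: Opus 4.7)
The plan is to derive the corollary by applying Theorem \ref{vector1} to extended vectors obtained from the given one by appending zero entries. For each target valency $k$ with $d' \leq k \leq r-1$, I would form a new vector of length $k+1$ by listing the $d'$ non-zero entries of $(x_1,\ldots,x_d)$ and appending $k+1-d'$ additional zeros. The key observation is that Theorem \ref{vector1} explicitly permits $x_i = 0$ in its even-parity construction, so the added colors correspond to color classes that sit entirely inside the constituent as perfect matchings.

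I would then verify the hypotheses of Theorem \ref{vector1} for this extended vector: the coordinate sum is still $r$; every coordinate is even (the non-zero $x_i$ by assumption, the appended zeros trivially); hence $r$ is even; and the required inequality $r \geq k+1$ is exactly $k \leq r-1$. Because the common parity is even, no extra restriction on the length of the vector is needed. Theorem \ref{vector1} therefore produces a constituent regular of valency $(k+1)-1 = k$ whose sun is class I.

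Letting $k$ range over $\{d', d'+1, \ldots, r-1\}$ exhibits class I suns with constituents of every valency in the required range. No substantive new combinatorial work is required; the corollary is essentially the observation that each color unused by $M_F$ at $v$ becomes a perfect matching inside $\mathrm{con}(v)$, which raises the constituent valency by one per color added while preserving admissibility. The only thing to watch is that a padded vector with more zeros than $r-d'$ would violate $r \geq d$ in Theorem \ref{vector1}, which is why the valency range is capped at $r-1$.
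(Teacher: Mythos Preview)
Your argument is correct. Padding the $d'$ non-zero entries with $k+1-d'$ zeros to get a length-$(k+1)$ vector, and then invoking Theorem~\ref{vector1} in its even-parity case, legitimately produces a $k$-regular constituent with a class~I sun for each $k$ in the range $d'\le k\le r-1$; the hypothesis $r\ge k+1$ is exactly the upper bound you note.

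The route, however, differs from the paper's. The authors explicitly say the corollary follows from the \emph{proof} of Theorem~\ref{vector1} rather than its statement: they run the even-case construction using only the $d'$ colors actually present on $M_F$, observe that this already yields a regular constituent, and then append unused canonical color classes one at a time to step the valency up to $r-1$. Your approach instead treats Theorem~\ref{vector1} as a black box and manufactures, for each target $k$, a fresh padded vector to which the statement applies directly. What you gain is cleanliness---no need to reopen the construction---while the paper's version makes transparent that the constituents for successive $k$ are nested (each obtained from the previous by adjoining a single perfect matching), a structural fact your formulation does not display but also does not need.
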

\begin{proof} This result follows from the proof of Theorem \ref{vector1} rather than the
statement.  Because the number of colors on edges of $M_F$ is $d'$, the scheme used in
the proof of Theorem \ref{vector1} introduces no new colors so that the valency of
regularity is $d'$.  Because $r$ is even, we may add canonical coloring classes one at a
time until reaching $K_r$.  This completes the proof.   \end{proof} 

\begin{cor}\label{sreg} A multigraph $X$ has a class {\em I} semiregular truncation if and only if
$X$ has a parity-balanced edge coloring.
\end{cor}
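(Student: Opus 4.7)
The plan is to reduce both directions to Theorem~\ref{vector1} applied at each vertex of $X$, via the bijection $F\colon E(X)\to M_F$, which turns an edge coloring of $X$ into a coloring of the $M_F$-edges of any truncation $\mathrm{TR}(X)$.

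For the forward direction, I would start from a class~I semiregular truncation and a proper $\chi'(\mathrm{TR}(X))$-coloring, restrict this coloring to $M_F$, and pull it back to $E(X)$ through $F^{-1}$. At each $v\in V(X)$ the sun centered at $\mathrm{con}(v)$ is class~I by the semiregularity hypothesis, so the necessity half of Theorem~\ref{vector1} forces the vector of $M_F$-color multiplicities at $v$ to have every coordinate congruent to $r(v)=\mathrm{val}(v)$ modulo~$2$. Read back through $F^{-1}$, this says every color used at $v$ in the induced $X$-coloring appears with multiplicity of parity $r(v)$, i.e., the coloring is parity-balanced.

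For the reverse direction, I would begin with a parity-balanced edge coloring of $X$ and view it through $F$ as a precoloring of $M_F$. At each $v$ I let $d(v)$ be the number of distinct colors appearing there and form the multiplicity vector $(x_1(v),\ldots,x_{d(v)}(v))$. Parity-balancedness makes every $x_i(v)$ congruent to $r(v)$ modulo~$2$, and in the odd case forces $d(v)$ to be odd and $r(v)\geq d(v)$; in the even case $r(v)\geq d(v)$ holds since each appearing color contributes $x_i(v)\geq 2$. The sufficiency half of Theorem~\ref{vector1} then supplies, at each $v$, a $(d(v)-1)$-regular $\mathrm{con}(v)$ and a proper sun coloring using these $d(v)$ colors that extends the precoloring on the incident $M_F$-edges. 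Assembling these across $v$ yields a semiregular truncation whose sun colorings fit together globally, since distinct suns share only $M_F$-edges whose colors are already pinned down.

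The main obstacle is then to verify that the assembled truncation is class~I, i.e., that $\chi'(\mathrm{TR}(X))=\Delta(\mathrm{TR}(X))=\max_v d(v)$, rather than being forced up to the possibly larger number $N$ of colors of the original parity-balanced coloring. This should reduce to noting that two colors never co-occurring at a single vertex of $X$ have their truncation color classes supported on vertex-disjoint subgraphs of $\mathrm{TR}(X)$ and can therefore be merged without damaging properness; iterating collapses the palette to exactly $\Delta(\mathrm{TR}(X))$ colors, giving class~I.
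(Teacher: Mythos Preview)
There are genuine gaps in both directions.

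For the forward direction, you invoke the necessity half of Theorem~\ref{vector1} on the vector of $M_F$-multiplicities at $v$ \emph{computed from the global $\chi'(\mathrm{TR}(X))$-coloring}. But that necessity argument needs a proper coloring of the $d(v)$-regular sun using exactly $d(v)$ colors, so that every color hits every cluster vertex and one can write $x_i=r(v)-2k_i$. When $d(v)<\Delta(\mathrm{TR}(X))$, the restriction of the global coloring to the sun at $v$ may use more than $d(v)$ colors; a global color absent from that sun then contributes multiplicity $0$ at $v$ in the pulled-back coloring of $X$, which is even and hence violates parity-balancedness whenever $r(v)$ is odd. Knowing the sun is class~I only tells you that \emph{some} $d(v)$-coloring exists, not that the restricted global coloring is one. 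The paper does not route through a global coloring at all: it contracts and retains the $M_F$-colors coming from the class~I colorings of the suns themselves, to which Theorem~\ref{vector1} applies directly.

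For the reverse direction, the merging step does not do what you claim. Two colors are mergeable exactly when they are non-adjacent in the ``conflict graph'' on the palette (an edge records co-occurrence at some vertex of $X$), so collapsing the palette to $\max_v d(v)$ colors is asking this conflict graph to have chromatic number equal to its clique number $\max_v d(v)$. That fails already for $X$ the $5$-cycle with every edge doubled, where each parallel pair gets its own color $1,\ldots,5$: this coloring is parity-balanced, every vertex sees exactly two colors so $\max_v d(v)=2$, but the conflict graph is $C_5$, whose chromatic number is $3$. Your iteration therefore cannot reach $\Delta(\mathrm{TR}(X))$ colors in general. The paper attempts no such reduction; it stops once Theorem~\ref{vector1} supplies, at each vertex, a regular constituent with a class~I sun, which is exactly the definition of a semiregular truncation.
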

\begin{proof}  If $X$ has a parity balanced edge coloring, it follows immediately from
Theorem \ref{vector1} that it has a semiregular truncation.  On the other hand, if $X$
has a semiregular truncation, then performing the standard contraction and retention
of the colors on the edges of $M_F$ produces a parity-balanced edge coloring of $X$.   \end{proof}

\begin{cor}\label{reg} A multigraph $X$ has a class {\em I} regular truncation of valency $d$ if and only if it
has a parity-balanced coloring and one of the following conditions holds:\\

{\rm (i)} When $d$ is odd, every vertex of odd valency has precisely $d$ colors on its
incident edges, and every vertex of even valency has valency at least $d+1$ and at
most $d$ colors on its incident edges ; or\\

{\rm (ii)} When $d$ is even, every vertex of $X$ has even valency at least $d$.
\end{cor}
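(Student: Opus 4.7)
The plan is to reduce the global statement to a vertex-by-vertex application of Theorem \ref{vector1}, so that only parity bookkeeping remains.

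For necessity, suppose $\mathrm{TR}(X)$ is class I and regular of valency $d$, and fix a proper $d$-edge coloring. Each vertex of $\mathrm{TR}(X)$ lies in a unique cluster $\mathrm{cl}(v)$ and is incident with exactly one edge of $M_F$, so $\mathrm{con}(v)$ must be $(d-1)$-regular and the sun centered at $\mathrm{con}(v)$ is class I using $d$ colors. Pull the coloring of $M_F$ back to $X$ via $F$. At each vertex $v$ of valency $r$ the color vector $(x_1,\ldots,x_d)$ satisfies $\sum_i x_i=r$ and is admissible, so by Theorem \ref{vector1} every $x_i$ has the same parity as $r$, and $d$ is odd whenever $r$ is. The first conclusion says the induced coloring of $X$ is parity-balanced. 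For (i) with $d$ odd: when $r$ is odd all $x_i$ are odd hence nonzero, giving precisely $d$ colors at $v$; when $r$ is even the bound $r\ge d$ combined with the parity mismatch forces $r\ge d+1$, and at most $d$ colors is trivial. For (ii) with $d$ even, Theorem \ref{vector1} forbids odd $r$, so every valency is even and $\ge d$.

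For sufficiency, begin with a parity-balanced coloring of $X$ that uses at most $d$ colors globally (padding with empty color classes if necessary). At each vertex $v$ of valency $r$ record the color vector $(x_1,\ldots,x_d)$; its entries all share parity with $r$. The remaining hypothesis $r\ge d$ of Theorem \ref{vector1} is immediate from (i) or (ii), and its parity-side rider holds in each odd-$r$ subcase because (i) assumes $d$ odd. Applying Theorem \ref{vector1} independently at each $v$ supplies a $(d-1)$-regular constituent $\mathrm{con}(v)$ together with a proper $d$-edge coloring of the sun that agrees with the colors already fixed on $M_F$. Gluing the constituents produces a $d$-regular $\mathrm{TR}(X)$ with a proper $d$-edge coloring, hence class I.

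The main obstacle is the parity bookkeeping rather than any nontrivial construction: one must notice that when $d$ is odd and $r$ is even the bound $r\ge d$ from Theorem \ref{vector1} cannot be tight, which is why (i) demands $r\ge d+1$ rather than $r\ge d$. A secondary point is justifying the assumption that the parity-balanced coloring uses at most $d$ colors globally; when (i) applies and $X$ has an odd-valency vertex this is forced by the "precisely $d$ colors" clause, and in the remaining cases one can restrict or merge palettes without disturbing parity balance.
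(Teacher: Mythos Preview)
Your argument is correct and follows essentially the same route as the paper: both directions are reduced, vertex by vertex, to the admissibility criterion of Theorem~\ref{vector1}, with only parity and palette-size bookkeeping left over. If anything you are more careful than the paper---you spell out why $r\ge d$ sharpens to $r\ge d+1$ when $d$ is odd and $r$ is even, and you address why the parity-balanced coloring may be taken to use at most $d$ colors globally (the odd-valency case via the observation that parity balance forces every global color to appear at any odd-valency vertex, the all-even case via merging), points the paper's proof leaves implicit.
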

\begin{proof} Let $X$ be a mutigraph and suppose it has a generalized truncation $Y$
which is regular of valency $d$.  Consider the case that $d$ is even.  This means that a
constituent $\mathrm{con}(v)$ is regular of valency $d-1$ and the latter is odd.  Thus,
the constituent has even order.  This implies that every vertex of $X$ has even valency.
Clearly every valency is at least $d$.  

When $d$ is odd, each constituent must itself be regular of valency $d-1$.  Because
$d-1$ is even, there is no restriction on the order $N$ of the constituent other than it must
be at least $d$.  If $N$ is odd, then for each of the $d$ distinct colors, there must be at
least one edge of that color belonging to the edges of $M_F$ incident with vertices of the
constituent.  So the corresponding vertex of $X$ is incident with edges of precisely $d$
different colors.  When $N$ is even there is no restriction on the number of colors on
edges of $M_F$ incident with vertices of the constituent other than it is at most $d$.

\smallskip

For the other direction, when $d$ is even, every constituent has even order and by
Corollary \ref{even} it is easy to obtain regular valency of $d-1$ on the constituent.
When $d$ is odd, the result follows from Theorem \ref{vector1}.   \end{proof}

\section{Complete Truncations}

Let $X$ be a multigraph with maximum valency $\Delta$.  If $\Delta$ is odd and $X$ has an
edge coloring with $\Delta$ colors such that the edges incident with every vertex of
valency $\Delta$ have distinct colors, then we say that $X$ is {\it edge-feasible}. 
The next result characterizes graphs whose complete truncations are class I.  Of course,
a complete truncation is a semi-regular truncation.

\begin{thm}\label{main} The complete truncation of a multigraph $X$ is class {\em I}
if and only if either the maximum valency $\Delta$ of $X$ is even, or $\Delta$ is odd
and $X$ is edge-feasible.
\end{thm}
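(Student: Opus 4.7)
My approach is to prove necessity and sufficiency separately, with sufficiency split by the parity of $\Delta$.

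For necessity, assume $\mathrm{TR}(X)$ is class I; the case $\Delta$ even is vacuous, so suppose $\Delta$ is odd. Fix a proper $\Delta$-edge coloring of $\mathrm{TR}(X)$ and consider a vertex $v\in V(X)$ of valency $\Delta$. Its constituent $K_\Delta$ is a subgraph, and since $\chi'(K_\Delta)=\Delta$, all $\Delta$ colors appear on $K_\Delta$. By Lemma~\ref{unique} the colors missing at its $\Delta$ vertices form a bijection with the color set, and the pendant at each vertex of $K_\Delta$ is forced to carry the locally missing color; hence the $\Delta$ pendants at $v$ have pairwise distinct colors. Pulling back these colors to $X$ via the bijection $F$ yields an edge coloring of $X$ with $\Delta$ colors whose incident edges at every valency-$\Delta$ vertex are distinctly colored—exactly edge-feasibility.

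For sufficiency when $\Delta$ is odd and $X$ is edge-feasible, I would fix an edge-feasible $\Delta$-coloring of $X$, assign its colors to the pendants of $\mathrm{TR}(X)$, and color each constituent. At a vertex $v$ of valency $\Delta$ the $\Delta$ pendants are distinctly colored, so by labelling $\mathrm{cl}(v)$ so that the missing color at $u_i$ in the canonical edge coloring of $K_\Delta$ matches the pendant color at $u_i$ (this freedom comes from our choice of $F$), the sun at $v$ is properly $\Delta$-colored. At a vertex $v$ of valency $r<\Delta$, equip each edge $[u_i,u_j]$ of $K_r$ with the list $L_{[u_i,u_j]}=\{1,\ldots,\Delta\}\setminus\{c_{u_i},c_{u_j}\}$, where $c_{u_k}$ is the pendant color at $u_k$; each list has size at least $\Delta-2$, so when $r\leq \Delta-2$ Theorem~\ref{HJ} yields the required proper list edge coloring of $K_r$. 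The residual subcase $r=\Delta-1$, which forces $r$ to be even, I would handle directly by embedding $K_{\Delta-1}$ as $K_\Delta$ minus a vertex carrying the canonical coloring and exploiting the freedom in which vertex to delete together with a rotation to meet the pendant constraints.

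For sufficiency when $\Delta$ is even, the plan is to construct an edge coloring of $X$ with $\Delta$ colors whose color vector at every vertex $v$ of valency $r$ is admissible in the sense of Theorem~\ref{vector1} for $d=r$: the incident edges at every odd-valency vertex are distinctly colored, and every color occurs an even number of times at every even-valency vertex. Since $\Delta$ is even, each odd-valency vertex has valency at most $\Delta-1$, leaving enough slack for the distinctness condition, while the parity condition at even-valency vertices can be arranged by decomposing $E(X)$ into closed trails and paths whose endpoints are precisely the odd-valency vertices (a standard Eulerian argument for multigraphs) and pairing colors along each trail. Once this coloring is fixed, Theorem~\ref{vector1} makes each sun class I with at most $\Delta$ colors, and the shared colors on the pendants guarantee that the individual sun colorings glue to a proper $\Delta$-coloring of $\mathrm{TR}(X)$.

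The main obstacle is the $r=\Delta-1$ subcase of the $\Delta$-odd sufficiency: the list sizes $\Delta-2$ fall one short of the $r$ required by Theorem~\ref{HJ}, so a tailored argument using the specific structure of lists inherited from the canonical coloring is needed. A secondary subtlety is ensuring that the Eulerian-decomposition construction in the $\Delta$-even sufficiency produces a valid coloring uniformly for all multigraphs, not merely graphs.
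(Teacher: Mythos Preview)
Your necessity argument and the $\Delta$-odd sufficiency for $r=\Delta$ and $r\le\Delta-2$ match the paper exactly. Two places diverge.

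\medskip

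\textbf{The $\Delta$ even case.} Your Eulerian/parity-balanced route via Theorem~\ref{vector1} is a detour, and the ``secondary subtlety'' you worry about never arises. The paper's argument here is one line: color \emph{every} edge of $M_F$ with a single color. Each constituent $K_r$ with $r\le\Delta$ can then be properly edge-colored with at most $\Delta-1$ fresh colors (the maximal case $r=\Delta$ uses that $K_\Delta$ is class~I when $\Delta$ is even), and the sun colorings glue because all pendants share the one remaining color. There is no need to control parities at vertices of $X$ at all.

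\medskip

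\textbf{The $r=\Delta-1$ subcase when $\Delta$ is odd.} Your stated idea---view $K_{\Delta-1}$ as $K_\Delta$ minus a vertex under the canonical coloring and choose which vertex to delete---does not meet the pendant constraints in general. Deleting one vertex leaves a $(\Delta-1)$-coloring of $K_{\Delta-1}$ in which the missing colors at the $\Delta-1$ remaining vertices are pairwise \emph{distinct}; no rotation changes that. But an edge-feasible coloring of $X$ imposes no distinctness at a vertex of valency $\Delta-1$: the pendant pattern can be anything from $\Delta-1$ distinct colors down to all pendants the same color, so you may need the same color missing at many vertices of the constituent. The paper's fix is genuinely different. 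It takes the canonical $(\Delta-2)$-coloring of $K_{\Delta-1}$ itself (so two of the $\Delta$ colors, namely $c(t-1)$ and $c(t)$, are entirely absent from the constituent), groups the vertices into blocks by pendant color, and for each block chooses the \emph{anchor} of one canonical color class so that the edges of that class which conflict with the pendants lie in a controlled position. The conflicting edges then form a disjoint union of isolated edges and paths, and these are recolored alternately with the two reserved colors $c(t-1),c(t)$. That recoloring step using two spare colors is the missing idea in your proposal.
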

\begin{proof} This theorem is an improvement on Theorem \ref{color} as the latter does
not include multigraphs as part of the hypotheses.  Because a complete truncation is a
semiregular truncation, Corollary \ref{sreg} implies that the coloring of the edges in $M_F$
must correspond to a parity-balanced coloring of $X$.  When $\Delta$ is even, coloring all the
edges of $M_F$ with a single color corresponds to a parity balanced coloring of $X$.  Any
constituent of order $\Delta$ can be properly edge-colored with $\Delta-1$ colors because
$\Delta$ is even.  Any constituent of order less than $\Delta$ can be properly edge-colored
with at most $\Delta-1$ colors.  Hence, the complete truncation of $X$ is class I.

For the remainder of the proof we assume that $\Delta$ is odd.  First suppose that the
complete truncation $\mathrm{TR}(X)$ is class I.
If $u\in V(X)$ has valency $\Delta$, then $\mathrm{con}(u)=K_{\Delta}$ the complete
graph of order $\Delta$. We conclude that the edges of $M_F$ incident with $\mathrm{con}(u)$
all have different colors by Lemma \ref{unique}.  So if we contract each  constituent to a
single vertex, delete all the loops formed and keep the colors of the edges of $M_F$, we
obtain an edge coloring of $X$.  Clearly, all the edges incident to any vertex of valency
$\Delta$ in $X$ have distinct colors.  Thus, $X$ is edge-feasible.

Now let $X$ be edge-feasible and choose an edge coloring of $X$ which is edge-feasible.
Let $\mathrm{TR}(X)$ be the complete truncation of $X$.  Color the edges
of $M_F$ with the same color they had in the edge coloring of $X$.  These are the edges
between the constituents all of which are complete subgraphs.  At this point we have used
$\Delta$ colors.  It suffices to show that we can color the edges of the constituents without
introducing any new colors so that the resulting edge coloring of $\mathrm{TR}(X)$ is proper. 

Any constituent $\mathrm{con}(u)$ of order $\Delta$ corresponds to a vertex $u$ of
$X$ of valency $\Delta$.  This implies that the edges of $M_F$ incident with the vertices
of $\mathrm{con}(u)$ all have distinct colors because the coloring of $X$ was edge-feasible.
From Lemma \ref{unique} it is clear that we may color the edges of $\mathrm{con}(u)$
with $\Delta$ colors so that the missing color at each vertex is the color of the edge of
$M_F$ at the vertex.    This coloring of the edges does not violate the definition of a
proper edge coloring.

Now consider any constituent $\mathrm{con}(u)$ of order $r\leq\Delta-2$.  Because
each edge of $\mathrm{con}(u)$ has one edge of $M_F$ at each of its end vertices,
the number of possible colors for the edge that do not violate the proper edge
coloring condition is $\Delta-2$.  So each edge has a list of $\Delta-2$ possible
colors, and Theorem \ref{HJ} implies that we may color the edges of $\mathrm{con}(u)$
without violating the proper edge coloring condition because $r\leq\Delta-2$.

This leaves us with the case that $\mathrm{con}(u)$ has order $\Delta-1$ and this is
the most complicated case.  The first observation we make is that the coloring pattern 
of the edges of $M_F$ incident with the vertices of $\mathrm{con}(u)$ can vary all
the way from having $\Delta-1$ distinct colors to every color being the same.  So we
introduce a sequence describing the color pattern.  Let $(s_1,s_2,\dots,s_t)$ satisfy
$s_1\leq s_2\leq\cdots\leq s_t$ and $s_1+s_2+\cdots+s_t=\Delta-1$.  The sequence
means there are $t$ distinct colors on the edges of $M_F$ incident with vertices of
$\mathrm{con}(u)$ and $s_i$ such edges have the same color $c(i)$ for $i=1,2,\ldots,t$.

We need to show that no matter what the color sequence is we may color the edges
of $\mathrm{con}(u)$ so that the sun centered at $\mathrm{con}(u)$ is properly
edge colored with $\Delta$ colors.  As a first step, label the vertices of $\mathrm{con}(u)$
with $v_0,v_1,\ldots,v_a$, where $a=\Delta-2$.  Let the vertices $v_1,v_2,\ldots,v_{s_1}$
be incident with the $s_1$ edges of $M_F$ of color $c(1)$.  Let the next $s_2$ vertices
be incident with the $s_2$ edges of $M_F$ of color $c(2)$.  Continue labelling the
vertices in the obvious manner and let the edge of $M_F$ incident with $v_0$ have
color $c(t)$. 

Carry out an initial coloring of the edges of $\mathrm{con}(u)$ using the canonical
edge coloring described in Section 2 with $v_0$ acting as the fixed vertex and $\rho=
(v_0)(v_1\;\;v_2\;\;\cdots\;\;v_a)$.  The strategy now is to choose the colors for
the color classes in such a way that we may re-color some edges to obtain a proper
edge coloring for the sun centered at $\mathrm{con}(u)$.   

The first observation we make is that only $\Delta-2$ colors are required for a
canonical edge coloring of $\mathrm{con}(u)$.  So we do not use the colors $c(t-1)$
or $c(t)$ for the canonical edge coloring of $\mathrm{con}(u)$.  Hence, if $t=1$
or $t=2$, we already have an edge coloring of $\mathrm{con}(u)$ that does not
violate the conditions for a proper edge coloring.  Thus, we assume that $t\geq 3$.

The anchor for a color class plays an active role as follows.  Suppose we require two
edges of a color class so that the two edges use four successive vertices under the
cyclic labelling of $\{v_1,v_2,\ldots,v_a\}$.  If the anchor is $[v_i,v_{i+1}]$, then
adding the edge $[v_{i-1},v_{i+2}]$ from the same color class easily does the
job.  It is now easy to see how we may obtain $k$ edges from the same color class
so that they cover $2k$ successive vertices.  With this in mind, we now describe an
iterative process for determining the colors of certain color classes.  

If $s_1$ is odd, then we color the color class for which $[v_{(s_1+1)/2},v_{(s_1+3)/2}]$
is the anchor with $c(1)$.  If $s_1$ is even, then we color the color class for which
$[v_{s_1/2},v_{(s_1+2)/2}]$ is the anchor with $c(1)$.  There are two points to
observe about the preceding choices.  When $s_1$ is odd, the edge from $v_1$ to
$v_{s_1+1}$ has color $c(1)$ and note that the edge of $M_F$ incident with
$v_{s_1+1}$ has color $c(2)$.  When $s_1$ is even, the edge from $v_1$
to $v_{s_1}$ has color $c(1)$ and vertex $v_{s_1}$ is the last vertex for which the
edge of $M_F$ incident with it has color $c(1)$.

We continue in the manner suggested by the preceding paragraph, but discuss it
further to make it clearer.  When $s_i$ is odd and the first vertex incident with an
edge of $M_F$ of color $c(i)$ is $v_d$, then the anchor is the edge $[v_{d+(s_i-1)/2},
v_{d+(s_i+1)/2}]$ and we color this color class with $c(i)$.  Note that the edge
$[v_d,v_{d+s_i}]$ is in this color class, but the edge of $M_F$ incident with
$v_{d+s_i}$ has color $c(i+1)$.

When $s_i$ is even and the first vertex incident with an edge of $M_F$ of color
$c(i)$ is $v_d$, then the anchor is $[v_{d+(s_i-2)/2},v_{d+s_i/2}]$ and we color
this color class with $c(i)$.  In this case, the edge from $v_d$ to $v_{d+s_i-1}$
is colored $c(i)$ and $v_{d+s_i-1}$ is the last vertex incident with an edge of
$M_F$ of color $c(i)$. 

The preceding procedure is carried out for the colors $c(1)$ through $c(t-2)$ at
which point it stops because colors $c(t-1)$ and $c(t)$ are not used for the canonical
edge coloring of $\mathrm{con}(u)$.  The edges of $\mathrm{con}(u)$ that need
to be re-colored are those that are adjacent with edges of $M_F$ having the same
color.   We have seen that when $s_i$ is odd, there is an edge in $\mathrm{con}(u)$
of color $c(i)$ with one end vertex incident with an edge of $M_F$ of color $c(i)$ and
the other end vertex incident with an edge of $M_F$ of color $c(i+1)$.  So at the
end vertex incident with an edge of $M_F$ of color $c(i+1)$, the procedure gives
another edge of color $c(i+1)$ which also needs to be re-colored.  

Thus, the edges that need to be recolored are isolated and possibly paths.
Luckily we have two colors to use for the re-coloring and we arbitrarily re-color
isolated edges with either $c(t-1)$ or $c(t)$, and alternately re-color the edges
of the paths making certain that if one of the paths terminates with a vertex
whose incident edge from $M_F$ has color $c(t-1)$, we color the edge of the
path terminating there with $c(t)$.  This removes all potential color conflicts
for $\mathrm{con}(u)$ and completes the proof of the theorem.      \end{proof}

\begin{cor}\label{Delta} Let $X$ be a class I graph and $\mathrm{TR}(X)$ a generalized
truncation.  If $\Delta(X)=\Delta(\mathrm{TR}(X))$, then $\mathrm{TR}(X)$ is
class I.
\end{cor}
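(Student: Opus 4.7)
Let $Y$ denote the complete truncation of $X$ obtained by taking the very same excision matching $M_F$ used to form $\mathrm{TR}(X)$. The plan is to exhibit $\mathrm{TR}(X)$ as a spanning subgraph of $Y$ and then to invoke Theorem \ref{main}. Both $\mathrm{TR}(X)$ and $Y$ share the same vertex set and the same inter-cluster matching, while each constituent $\mathrm{con}(v)$ of $\mathrm{TR}(X)$ is, by definition, a subgraph of the complete graph on $\mathrm{cl}(v)$. Hence every proper edge coloring of $Y$ restricts to a proper edge coloring of $\mathrm{TR}(X)$ that introduces no new colors, which gives $\chi'(\mathrm{TR}(X))\leq\chi'(Y)$.

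Next I would verify the hypothesis of Theorem \ref{main} for $Y$. Write $\Delta=\Delta(X)$. If $\Delta$ is even, Theorem \ref{main} immediately yields that $Y$ is class I. If $\Delta$ is odd, I argue that $X$ is edge-feasible: fix any proper $\Delta$-edge-coloring of $X$, which exists because $X$ is class I; at any vertex of valency $\Delta$ the $\Delta$ incident edges are pairwise adjacent and therefore carry pairwise distinct colors. Thus $X$ is edge-feasible, and Theorem \ref{main} once again gives that $Y$ is class I. Since $\Delta(Y)=\Delta(X)=\Delta$, being class I translates to $\chi'(Y)=\Delta$.

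Combining the preceding two paragraphs yields $\chi'(\mathrm{TR}(X))\leq\Delta$, while the hypothesis $\Delta(\mathrm{TR}(X))=\Delta(X)=\Delta$ supplies the trivial lower bound $\chi'(\mathrm{TR}(X))\geq\Delta$. Therefore $\chi'(\mathrm{TR}(X))=\Delta=\Delta(\mathrm{TR}(X))$, and $\mathrm{TR}(X)$ is class I. No step is really an obstacle here: all the substantive work was carried out in Theorem \ref{main}, and the corollary amounts to the observation that thinning each constituent from $K_r$ down to an arbitrary subgraph, while $M_F$ is left untouched, can only decrease (never increase) the chromatic index.
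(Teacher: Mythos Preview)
Your proof is correct and follows the same route as the paper: embed $\mathrm{TR}(X)$ as a spanning subgraph of the complete truncation $Y$, apply Theorem~\ref{main} to get $\chi'(Y)=\Delta$, and restrict the coloring. The only difference is that you spell out why Theorem~\ref{main} applies when $\Delta$ is odd (class~I implies edge-feasible), whereas the paper invokes the theorem without making that step explicit.
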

\begin{proof} Let $\mathrm{TR}(X)$ be a generalized truncation of $X$ satisfying
$\Delta(\mathrm{TR}(X))=\Delta(X)$.  Then a proper edge coloring of $\mathrm{TR}(X)$
requires at least $\Delta$ colors.  We know the complete truncation $Y$ of $X$
is class I by Theorem \ref{main}, that is, it has a proper edge coloring using
$\Delta$ colors.  Remove any edges of $Y$ not belonging to $\mathrm{TR}(X)$ and
we are left with a proper edge coloring of $\mathrm{TR}(X)$ using $\Delta$ colors.  So
$\mathrm{TR}(X)$ is class I.   \end{proof}

\medskip

It is well known that both the Petersen graph and its complete truncation are class II graphs.
The extension to a regular multigraphs is an immediate corollary of Theorem \ref{main}.

\begin{cor}\label{2-2} A regular multigraph $X$ of odd valency is class {\em I} if and only
if its complete truncation is class {\em I}.
\end{cor}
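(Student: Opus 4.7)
The plan is to deduce this as a direct consequence of Theorem \ref{main} by observing that, for a regular multigraph of odd valency $\Delta$, the property of being edge-feasible collapses to the property of being class I.

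First I would unpack the definition of edge-feasible. Recall that $X$ is edge-feasible if $\Delta$ is odd and $X$ admits an edge coloring with $\Delta$ colors in which the edges at every vertex of valency $\Delta$ receive distinct colors. If $X$ is regular of odd valency $\Delta$, then \emph{every} vertex has valency $\Delta$, so the distinctness condition applies at every vertex. Hence an edge-feasible coloring of such an $X$ is precisely a proper edge coloring using $\Delta$ colors, which is exactly what it means for $X$ to be class I. Conversely, any proper $\Delta$-edge-coloring of $X$ witnesses edge-feasibility.

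With that equivalence in hand, the corollary reduces to Theorem \ref{main}. Since $\Delta$ is odd, Theorem \ref{main} says that the complete truncation $\mathrm{TR}(X)$ is class I if and only if $X$ is edge-feasible. Combined with the observation above, this yields: $\mathrm{TR}(X)$ is class I iff $X$ is edge-feasible iff $X$ is class I.

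There is no real obstacle here; the only thing to watch is to check that the definition of edge-feasible is being applied correctly, namely that it requires only $\Delta$ colors in total (not $\Delta+1$) and that the distinctness condition needs to be verified only at vertices attaining the maximum valency, which in the regular case is automatic at every vertex. Once this is noted, the corollary is immediate from Theorem \ref{main}.
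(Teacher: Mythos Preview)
Your proposal is correct and matches the paper's approach: the paper states that this is an immediate corollary of Theorem~\ref{main}, and you have simply spelled out why, namely that for a regular multigraph of odd valency $\Delta$ the notion of edge-feasible coincides with being class~I.
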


\section{Cyclic Truncations}

Complete truncations have been used by various authors, and there is
another generalized truncation that has been employed frequently. Namely, the generalized
truncation obtained by letting each constituent graph be a cycle.  We shall call these
{\it cyclic truncations}.  Probably the best known example of this is the cube-connected
cycles graph first introduced in \cite{P1}.  Of course, the ancient Greeks studied truncations
of Platonic and Archimedian solids, and these resulted in cyclic truncations.

When moving from a multigraph $X$ to a generalized truncation $Y$, we frequently
wish $Y$ to inherit properties of $X$.  This can be problem for cyclic truncations.  For
example, poorly chosen cyclic truncations can play havoc with automorphisms.

We are interested in determining conditions for which a cyclic truncation is class I.
Of course, a cyclic truncation is a regular truncation so that we may use Theorem \ref{vector1}.
We use the same vector describing the numbers of colors used on the edges
of $M_F$ belonging to a sun centered at a constituent.  In this case the vector has length
3 as we are looking at cyclic truncations all of which are 3-valent.  There is a new term
not used before, namely, a vector $(x_1,x_2,x_3)$ is {\it universal} if the sun centered
at the corresponding constituent is class I for every cycle on the vertices of the constituent.
 
\begin{cor}\label{vector} If $(x_1,x_2,x_3)$ satisfies $x_1+x_2+x_3=d\geq 3$, then
$(x_1,x_2,x_3)$ is admissible if and only $x_1,x_2,x_3\mbox{ and }d$ all have the same
parity.  Otherwise, the vector is totally inadmissible.  Moreover, if just one of $x_1,x_2
\mbox{ and }x_3$ is non-zero, then $(x_1,x_2,x_3)$ is universal when $d$ is even.
\end{cor}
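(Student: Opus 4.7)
The plan is to reduce both claims to direct applications of Theorem \ref{vector1}, together with the elementary fact that a cycle admits a proper $2$-edge-coloring precisely when its length is even.

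For the first claim, I would invoke Theorem \ref{vector1} with the length of the color vector equal to $3$ (since the sun centered at a cycle constituent is regular of valency $3$) and the vertex valency equal to $d$. The hypothesis $x_1+x_2+x_3=d\geq 3$ matches the condition $\sum x_i = r \geq d$ of Theorem \ref{vector1} after the substitutions ``$d$ of the theorem $\leftrightarrow 3$'' and ``$r$ of the theorem $\leftrightarrow d$ of the corollary''. The theorem then characterises admissibility by two conditions: (a) $x_1, x_2, x_3$ and $d$ all share a common parity; and (b) if that common parity is odd, then the length of the vector is odd. Condition (b) is automatic because the length is $3$. Hence admissibility is equivalent to the parity condition alone, and the contrapositive yields total inadmissibility in the remaining cases.

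For the universality claim, suppose $d$ is even and, without loss of generality, $x_1 = d$ while $x_2 = x_3 = 0$. Let $C$ be any cycle placed on the vertices of the constituent, and consider the sun consisting of $C$ together with the $d$ edges of $M_F$, all of which carry color $c(1)$. Since color $c(1)$ already appears at every vertex of $C$, any proper $3$-edge-coloring of the sun must use only the colors $c(2)$ and $c(3)$ on $C$. Such a $2$-edge-coloring exists if and only if $C$ has even length; since $C$ spans $d$ vertices and $d$ is even, this holds for every choice of cycle $C$, which establishes universality.

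The main pitfall, rather than a genuine obstacle, is keeping track of the notational clash: the symbol $d$ plays the role of vector length in Theorem \ref{vector1} but the role of vertex valency in Corollary \ref{vector}. Once this substitution is sorted out, both parts follow with essentially no further work, and it is worth recording that the second part needs no appeal at all to the explicit construction in the proof of Theorem \ref{vector1} — it is forced by parity of the cycle.
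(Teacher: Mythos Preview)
Your proposal is correct and follows essentially the same approach as the paper: the first part is Theorem~\ref{vector1} specialised to vector length $3$ (with the parity condition on the vector length becoming vacuous), and the universality part is the observation that an even cycle can be properly $2$-edge-colored with the two unused colors. Your write-up is in fact more explicit than the paper's on both points, including your careful handling of the notational clash between the two uses of~$d$.
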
 
\begin{proof} The portion of the statement prior to the sentence about universality is
simply Theorem \ref{vector1} restricted to $d=3$.  When the vector has the form
$(x_1,0,0)$ and $x_1=d$ is even, this corresponds to all the edges of $M_F$ incident
with the vertices of the constituent having the same color.  Clearly, no matter which
even length cycle we form on the vertices of the constituent, the resulting sun is 
class I.     \end{proof}

\medskip

Corollary \ref{vector} gives us an easy way to construct a class I cyclic truncation of a
multigraph $X$ based on parity conditions for the colors on the edges of $M_F$.  Thus,
we need to concentrate on coloring the edges in the source multigraph.  

\begin{cor} If every vertex of the multigraph $X$ has even valency, then every cyclic
truncation of $X$ is class I.
\end{cor}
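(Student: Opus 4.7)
The plan is to exploit the universality clause of Corollary \ref{vector}. The key idea is to color every edge of the matching $M_F$ with a single color; this reduces the verification at each constituent to an independent problem that universality already resolves.

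First I would assign color $c(1)$ to every edge of $M_F$. Because $M_F$ is a matching, this is automatically a proper partial edge coloring of $\mathrm{TR}(X)$. Now fix any $v \in V(X)$ and set $r = \mathrm{val}(v)$. The vector of $M_F$-colors on the sun centered at $\mathrm{con}(v)$ is $(r,0,0)$: exactly one nonzero coordinate, whose value is even by hypothesis. The universality clause of Corollary \ref{vector} then applies and guarantees that the sun centered at $\mathrm{con}(v)$ is class I regardless of which cycle the given cyclic truncation places on $\mathrm{cl}(v)$. Concretely, the cycle has even length $r$ and can be properly 2-edge-colored by alternating $c(2)$ and $c(3)$, after which every vertex of the cluster is incident with exactly one edge of each of the three colors $c(1), c(2), c(3)$.

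It remains to observe that these per-constituent completions fit together globally. This is immediate: the cycles at distinct constituents are vertex-disjoint, and the only edges crossing between clusters are edges of $M_F$, which are already uniformly colored $c(1)$. Hence we obtain a proper edge coloring of $\mathrm{TR}(X)$ using only three colors, and since $\mathrm{TR}(X)$ is 3-valent this witnesses that it is class I. The main (and essentially only) obstacle is the observation that $M_F$ being a matching permits a monochromatic assignment; after that, the result falls directly out of Corollary \ref{vector}.
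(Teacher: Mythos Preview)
Your proof is correct and follows exactly the paper's approach: color every edge of $M_F$ with a single color and invoke the universality clause of Corollary~\ref{vector}. Your write-up is simply a more detailed expansion of the paper's one-line argument.
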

\begin{proof} This follows from Corollary \ref{vector} by coloring all edges of $M_F$
with a single color.    \end{proof}

\begin{cor}\label{reg} If $X$ is a regular multigraph of odd valency $d>1$ and is class 
{\em I}, then there are class {\em I} cyclic truncations of $X$. 
\end{cor}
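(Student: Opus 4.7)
The plan is to use the class~I edge-coloring of $X$ to produce a parity-balanced $3$-coloring of $E(X)$, and then to exhibit explicitly, at every vertex, a Hamilton cycle on the cluster together with a $3$-coloring of its edges that makes the sun class~I.

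Since $X$ is $d$-regular, class~I, and $d$ is odd, a proper $d$-edge-coloring partitions $E(X)$ into $d$ perfect matchings $M_1,M_2,\ldots,M_d$. I would merge these into three labels by sending $M_1$ to color~$1$, $M_2$ to color~$2$, and $M_3\cup\cdots\cup M_d$ to color~$3$. At every vertex the color-count vector becomes $(1,1,d-2)$, which sums to $d$ and, since $d\ge 3$ is odd, has all three entries odd. Corollary~\ref{vector} then gives admissibility at every vertex, so some $2$-regular graph on each cluster produces a class~I sun.

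The main obstacle is that admissibility only yields \emph{some} $2$-regular graph on the cluster, whereas a cyclic truncation requires the constituent to be a single Hamilton cycle. I would handle this by writing the cycle and its coloring down directly. At a vertex $v$, label the cluster vertices $w_1,w_2,\ldots,w_d$ so that $w_1$ lies on the color-$1$ edge of $M_F$, $w_2$ on the color-$2$ edge, and $w_3,\ldots,w_d$ on the $d-2$ color-$3$ edges, and insert the Hamilton cycle $w_1w_2\cdots w_dw_1$. The edge $w_1w_2$ is then forced to color~$3$ and $w_2w_3$ to color~$1$; each $w_iw_{i+1}$ with $3\le i\le d-1$ lies between two pendants of color~$3$, so it must take a color in $\{1,2\}$, and adjacency with the preceding cycle edge forces the alternating pattern $2,1,2,1,\ldots$. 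Because $d-1$ is even, $w_{d-1}w_d$ ends up with color~$1$, and the closing edge $w_dw_1$, forced to color~$2$ in order to avoid the pendants colored $3$ and $1$, is then distinct from both $w_{d-1}w_d$ and $w_1w_2$.

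Performing this construction at every vertex of $X$, with each $M_F$ edge keeping at both of its ends the color it received in the $3$-coloring of $E(X)$, produces a proper $3$-edge-coloring of the cyclic truncation. Since the truncation is $3$-regular, this shows it is class~I. The oddness of $d$ is essential precisely for the parity alignment on the closing edge $w_dw_1$: if $d$ were even the alternating run would collide with the forced color there, and a different regrouping of matchings would be needed.
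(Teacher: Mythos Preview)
Your argument is correct and follows the same strategy as the paper: merge the $d$ colour classes of a proper $d$-edge-colouring into three labels so that every vertex receives the vector $(1,1,d-2)$, and then use the parity criterion to build a class~I sun on each cluster. Your explicit Hamilton cycle and its $3$-colouring are in fact more careful than the paper's version, which simply appeals to Corollary~\ref{vector}; that corollary, read literally, only guarantees some $2$-regular constituent rather than a single cycle, so your direct verification closes a small gap the paper leaves implicit.
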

\begin{proof} Let $X$ be a class I regular multigraph of odd valency $d\geq 3$.  Choose a
proper edge coloring of $X$ using $d$ colors.  In forming a cyclic truncation $Y$ of
$X$, color the edges of $M_F$ according to the colors $c(1),c(2),\ldots,c(d)$ the
edges have in the proper edge coloring of $X$.  Now change the color of any edge
of colors $c(4),c(5),\ldots,c(d)$ to $c(3)$.  The vector for each constituent becomes
$(1,1,d-2)$ all of which are odd.  We now may find a cycle for each constituent such
that the cyclic truncation is class I by Corollary \ref{vector}.   \end{proof}

\medskip

We now give a sufficient condition for a multigraph to have a class I cyclic truncation.
A definition is required.  Let $X$ be an arbitrary multigraph and $V_0,V_1,V_2\mbox{ and }
V_3$ be a partition of $V(X)$, where $V_i$ contains the vertices whose valencies
are congruent to $i$ modulo 4.  Given a submultigraph $Y$ of $X$, let $X\setminus Y$
be the submultigraph obtained by removing the edges of $Y$ from $X$.  Moreover, let
$V_0',V_1',V_2'\mbox{ and }V_3'$ denote the vertices of $V(X)$ whose valencies in
$X\setminus Y$ are congruent to $i$ modulo 4.  A submultigraph $Y$ of $X$ is called an {\it enabling
submultigraph} if 
satisfies: \begin{itemize}
\item if $v\in V_0$, then $v\in V_0'$;
\item if $v\in V_1$, then $v\in V_2'$;
\item if $v\in V_2$, then $v\in V_0'$; and
\item if $v\in V_3$, then $v\in V_2'$.
\end{itemize} 

The preceding conditions are not as complicated as they may look at first.  The first
thing to notice is that the components of $X\setminus Y$ are eulerian.  Second, for
many graphs the conditions are simple.  For example, if $X$ is 3-regular, then a
perfect matching is an enabling subgraph.

\begin{cor} Let $X$ be a multigraph with an enabling submultigraph $Y$.  If every
component of $X\setminus Y$ has even size, then there is a class I cyclic truncation of $X$.
\end{cor}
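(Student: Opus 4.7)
The plan is to build a proper $3$-edge-coloring of $X$ whose induced color vector at every cluster is admissible in the sense of Corollary~\ref{vector}; that corollary will then supply a cycle on each constituent so that every sun, and hence the whole cyclic truncation, is class~I.

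The enabling conditions imply that every vertex of $X\setminus Y$ has valency congruent to $0$ or $2$ modulo $4$, so each component of $X\setminus Y$ is eulerian. Since each such component has even size by hypothesis, I would fix an eulerian circuit in each and alternately color its edges with two colors $c_1$ and $c_2$. Because the circuit length is even, the two colors close up consistently, and the standard parity argument shows that at every vertex $v$ we obtain $x_1=x_2=\mathrm{val}_{X\setminus Y}(v)/2$. Painting all edges of $Y$ with a third color $c_3$ gives $x_3=\mathrm{val}_Y(v)$, and this $X$-coloring is transferred to $M_F$ edge by edge.

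The remaining task is to check that $(x_1,x_2,x_3)$ and $d=\mathrm{val}_X(v)$ share a common parity at every $v$. This is a four-case verification over $v\in V_0,V_1,V_2,V_3$: the enabling definition pins $\mathrm{val}_{X\setminus Y}(v)\pmod 4$, halving gives the parity of $x_1=x_2$, and $x_3\equiv d-\mathrm{val}_{X\setminus Y}(v)\pmod 4$ fixes the parity of $x_3$. One obtains all-even parities for $v\in V_0\cup V_2$ and all-odd parities for $v\in V_1\cup V_3$, matching $d$ in each case. Corollary~\ref{vector} therefore applies at every cluster.

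The only real obstacle is the parity case check, which is routine arithmetic but is the reason the enabling conditions were calibrated the way they were. The hypothesis that every component of $X\setminus Y$ has even size is exactly what is needed for the alternating coloring to be balanced at the start vertex of each eulerian circuit, so that $x_1=x_2$ holds at every vertex and not only at interior visits.
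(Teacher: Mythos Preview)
Your proposal is correct and follows essentially the same route as the paper: color the edges of each (eulerian, even-size) component of $X\setminus Y$ alternately along an Euler tour with two colors, give all edges of $Y$ a third color, and then verify the parity condition of Corollary~\ref{vector} at every vertex. The paper's proof only spells out the $V_3$ case of the parity check, whereas you outline all four, but the argument is the same.
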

\begin{proof} Because each component of $X\setminus Y$ has even size, we may alternately
color the edges of an Euler tour of the component with two colors so that we finish with same
number of colors on the edges incident with every vertex.  Doing this for each component
yields a 2-coloring so that each vertex of $X\setminus Y$ has the same number of colors
incident with it.

If we now color all the edges of $Y$ with a third color, it is easy to verify that the
conditions of Corollary \ref{vector} are met for $X$.  We check one possibility for
illustrative purposes.  If $v\in V_3$, it belongs to $V_2'$ in $X\setminus Y$.  Thus, the
number of edges of $Y$ incident with $v$ is congruent to 1 modulo 4, that is, it is
incident to an odd number of edges of the third color in $X$.  Because it is in $V_2'$,
it is incident with an odd number of edges of each of the first two colors.  Thus, the
vector for $v$ has three odd components.     \end{proof}

\begin{prop}\label{ce}  If a trivalent graph $X$ has a cut edge, then $X$ is class {\em II}.
\end{prop}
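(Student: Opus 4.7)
The plan is to argue by contradiction using a parity count on the components of $X$ cut by the edge $e$. Suppose $X$ is class~I. Since $X$ is 3-regular, Vizing's theorem and the class~I assumption give a proper edge coloring with exactly three colors; because $X$ is 3-regular, each color class $M_1,M_2,M_3$ is a perfect matching of $X$. Let $e=[u,v]$ be the cut edge and let $X_1$ be the component of $X\setminus\{e\}$ containing $u$ (so $v$ lies in the other component).

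Relabel so that $e\in M_1$. The plan is to compare the parity of $|V(X_1)|$ forced by $M_1$ against the parity forced by $M_2$ and $M_3$. Since $M_1$ is a perfect matching of $X$ containing $e$, the edges of $M_1$ other than $e$ form a perfect matching of $V(X_1)\setminus\{u\}$ (any $M_1$-edge with an endpoint in $V(X_1)\setminus\{u\}$ must have its other endpoint in $V(X_1)$, since $e$ is the only edge leaving $X_1$). Hence $|V(X_1)|-1$ is even, so $|V(X_1)|$ is odd.

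On the other hand, $u$ is saturated by $M_2$ and by $M_3$, and the $M_j$-edge at $u$ cannot be $e$, so it lies in $X_1$. Repeating the argument above with $M_j$ in place of $M_1$ (for $j=2,3$), every $M_j$-edge incident with a vertex of $V(X_1)$ stays inside $V(X_1)$, so $M_j$ restricted to $X_1$ is a perfect matching of $V(X_1)$. This forces $|V(X_1)|$ to be even, contradicting the previous paragraph. Therefore no proper 3-edge-coloring exists, and $X$ is class~II.

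There is no real obstacle here; the only thing to be careful about is justifying that the $M_j$-edge at $u$ (for $j=2,3$) really lies in $X_1$, which uses the assumption that $e$ is a cut edge so that $e$ is the unique edge of $X$ joining $V(X_1)$ to $V(X_2)$.
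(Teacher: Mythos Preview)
Your argument is correct. The paper itself does not actually write out a proof of this proposition; it simply declares the result immediate and moves on to an example, so there is nothing to compare against in detail.

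One minor remark: you use the matching $M_1$ to deduce that $|V(X_1)|$ is odd, but this parity is already forced by the handshake lemma applied to $X_1=X\setminus\{e\}$ restricted to the $u$-side: every vertex of $X_1$ has valency $3$ there except $u$, which has valency $2$, so $\sum_{w\in V(X_1)}\mathrm{val}(w)=3|V(X_1)|-1$ is even, giving $|V(X_1)|$ odd. Combining this with your observation that $M_2$ (or $M_3$) restricts to a perfect matching of $V(X_1)$ gives the contradiction in one step, and is presumably the ``immediate'' argument the authors had in mind. Your version is a perfectly good variant of the same idea.
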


The proof of the preceding proposition is immediate and leads to examples such as
the following.
Let $X$ be the graph of order 10 obtained by taking two vertex-disjoint copies of
$K_5$ and joining the two copies with a single edge from a vertex of one copy to
a vertex of the other copy.  Lemma \ref{unique} implies that $X$ is class I.  If we
take any cyclic truncation $\mathrm{TR}(X)$ of $X$, then $\mathrm{TR}(X)$ is
class II by Proposition \ref{ce} because the edge of $M_F$ connecting the two copies still
is a cut edge in $\mathrm{TR}(X)$.

\section{Other Truncations}

We now examine other generalized truncations.  An {\it arboreal truncation} is a
generalized truncation for which every constituent graph is a forest.  The following
result is useful for arboreal truncations.

\begin{thm}\label{strong} Let $\mathrm{TR}(X)$ be a generalized truncation of a
multigraph $X$.  If the maximum valency of $\mathrm{TR}(X)$ is $\Delta$ and every
constituent of $\mathrm{TR}(X)$ having a vertex of valency $\Delta$ is class {\em I},
then $\mathrm{TR}(X)$ is class {\em I}.
\end{thm}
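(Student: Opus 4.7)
My plan is to establish $\chi'(\mathrm{TR}(X)) \le \Delta$ by an explicit coloring using only $\Delta$ colors; combined with the obvious lower bound $\chi'(\mathrm{TR}(X)) \ge \Delta(\mathrm{TR}(X)) = \Delta$, this gives that $\mathrm{TR}(X)$ is class I.

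The key observation is that every constituent of $\mathrm{TR}(X)$ has chromatic index at most $\Delta - 1$. Split constituents into two cases. First, if $\mathrm{con}(v)$ contains a vertex $w$ of valency $\Delta$ in $\mathrm{TR}(X)$, then because $w$ is incident to exactly one edge of $M_F$, we have $\mathrm{val}_{\mathrm{con}(v)}(w) = \Delta - 1$; hence the maximum valency of $\mathrm{con}(v)$ is $\Delta - 1$, and by hypothesis $\mathrm{con}(v)$ is class I, so $\chi'(\mathrm{con}(v)) = \Delta - 1$. Second, if $\mathrm{con}(v)$ contains no vertex of valency $\Delta$ in $\mathrm{TR}(X)$, then every vertex of $\mathrm{con}(v)$ has constituent-valency at most $\Delta - 2$, so Theorem \ref{viz} yields $\chi'(\mathrm{con}(v)) \le \Delta - 1$.

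With this observation in hand, the construction is immediate. Using the palette $\{1,2,\ldots,\Delta\}$, I would properly edge-color each $\mathrm{con}(v)$ independently using only colors from $\{1,2,\ldots,\Delta-1\}$, and color every edge of $M_F$ with the single color $\Delta$. To verify properness: two constituent edges share a vertex only when they lie in the same constituent, where the coloring is proper by construction; two edges of $M_F$ are never adjacent since $M_F$ is a matching; and an edge of $M_F$ and a constituent edge meeting at a common vertex receive distinct colors, namely $\Delta$ and some color from $\{1,\ldots,\Delta-1\}$.

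There is essentially no serious obstacle. The argument is a clean packaging of Vizing's Theorem together with the hypothesis: the only slightly subtle point is recognizing that constituents without a max-valency vertex automatically have constituent-valency at most $\Delta - 2$, which lets Vizing's Theorem handle them even without any class I assumption.
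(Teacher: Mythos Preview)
Your proof is correct and follows essentially the same approach as the paper: color every constituent with at most $\Delta-1$ colors (using the class~I hypothesis for constituents containing a vertex of valency $\Delta$, and Vizing's Theorem for the rest since their maximum valency is at most $\Delta-2$), then assign a single fresh color to all edges of $M_F$. The only difference is that you spell out the properness verification in slightly more detail than the paper does.
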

\begin{proof} Let $\mathrm{con}(v)$ have a vertex of valency $\Delta$ in $\mathrm{TR}(X)$.
Then the maximum valency in the subgraph $\mathrm{con}(v)$ is $\Delta-1$.  Its edges
may be properly edge-colored with $\Delta-1$ colors because it is class I.  Any constituent
not having a vertex of valency $\Delta$ may have its edges properly colored with at most 
$\Delta-1$ colors because its maximum valency is $\Delta-2$.  Then the edges of $M_F$
may be colored with a new color yielding a proper edge coloring of $\mathrm{TR}(X)$
with $\Delta$ colors.  The conclusion now follows.     \end{proof}

\begin{cor}\label{} Every arboreal truncation of a multigraph $X$ is class I.
\end{cor}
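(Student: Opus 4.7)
The plan is to invoke Theorem \ref{strong} directly. To apply it, I need to verify that every constituent of an arboreal truncation $\mathrm{TR}(X)$ is class I, not just those constituents containing a vertex of valency $\Delta(\mathrm{TR}(X))$.

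The key observation is that every forest is bipartite, hence by König's classical edge coloring theorem, every forest (and more generally every bipartite multigraph) has chromatic index equal to its maximum valency. In particular, every constituent of an arboreal truncation is class I. Thus the hypothesis of Theorem \ref{strong} is satisfied trivially: whichever constituents happen to contain a vertex of maximum valency in $\mathrm{TR}(X)$, those constituents are forests and therefore class I.

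Applying Theorem \ref{strong} then yields that $\mathrm{TR}(X)$ is class I, completing the proof. There is no real obstacle here; the corollary is essentially a one-line consequence once one recalls that forests are class I. The only thing to be careful about is that Theorem \ref{strong} is phrased in terms of constituents containing a $\Delta$-valent vertex of $\mathrm{TR}(X)$, but since \emph{every} constituent of an arboreal truncation is class I, the hypothesis holds without any case analysis on the valencies realised by $X$.
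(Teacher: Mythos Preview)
Your proof is correct and follows essentially the same approach as the paper: invoke Theorem~\ref{strong} together with the fact that forests are class~I. The paper's proof is the one-line version of yours, simply asserting that forests are class~I rather than deriving it from K\"onig's theorem.
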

\begin{proof}  This follows immediately from Theorem \ref{strong} because forests
are class I.     \end{proof}
 
\section{Acknowledgement}

The second author wishes to thank the University of Newcastle for support from a
2020 - 2021 Summer Research Scholarship during which time this research was begun.


\begin{thebibliography}{9999}
\bibitem{A3} B. Alspach and J. Connor, Some graph theoretical aspects of generalized
truncations, {\sl Australas. J. Combin.} {\bf 79} (2021), 476--494.
\bibitem{H1} R. H\"aggkvist and J. Janssen, New bounds on the list-chromatic index of the complete graph
and other simple graphs, {\sl Combin. Probab. Comput.} {\bf 6} (1997), 295--313.
\bibitem{P1} F. Preparata and J. Vuillemin, The cube-connected cycles: a versatile network
for parallel computation, {\sl Comm. ACM} {\bf 24} (1981), 300--309.
\bibitem{V1} V. Vizing, Critical graphs with given chromatic class, {\sl Metody Diskret. Analiz.}
{\bf 5} (1965), 9--17. (In Russian.)

\end{thebibliography}
\end{document}